\documentclass[10pt]{article}
\usepackage[margin=1in]{geometry}
\usepackage{amsmath,amssymb,amsthm,mathtools}
\usepackage[all,cmtip]{xy}
\usepackage[hidelinks]{hyperref}
\hypersetup{pdftitle={A counterexample to Enochs conjecture under V=L}, pdfauthor={Chencheng Zhang}}
\xymatrixrowsep{1.5pc}
\allowdisplaybreaks
\newtheorem*{question}{Question}
\newtheorem{theorem}{Theorem}[section]
\newtheorem{lemma}[theorem]{Lemma}
\newtheorem{proposition}[theorem]{Proposition}
\newtheorem{fact}[theorem]{Fact}
\theoremstyle{remark}

\title{A counterexample to Enochs' conjecture under $V=L$}
\author{Chencheng Zhang\thanks{The author was supported by the National Natural Science Foundation of China (No.~12131015).}}
\date{\today}

\begin{document}
\maketitle
\begingroup
\makeatletter
\def\@thefnmark{}
\H@@footnotetext{\textit{2020 Mathematics Subject Classification.} Primary 16D90; Secondary 16E30, 03E45, 03E55.}
\makeatother
\endgroup

\begin{abstract}
Assuming the nonexistence of uncountable measurable cardinals, we construct an $\mathbb F_2$-algebra and a covering class of right modules which is not closed under colimits of countable chains.
This gives a counterexample to Enochs' conjecture under $V=L$.
\end{abstract}

\section{Introduction}

All rings are unital and all modules are right modules.
For a ring $R$, write $\operatorname{Mod}_R$ for its module category.
An isomorphism-closed class $\mathcal X\subseteq\operatorname{Mod}_R$ is \emph{precovering} if every module $M$ admits a map $p:X\longrightarrow M$ with $X\in\mathcal X$, such that every map $X'\longrightarrow M$ with $X'\in\mathcal X$ factors through $p$; such a map $p$ is an $\mathcal X$-precover of $M$.
A precover $p : X \longrightarrow M$ is a \emph{cover} if every $u\in\operatorname{End}_R(X)$ satisfying $pu=p$ is invertible; such a map $p$ is an $\mathcal X$-cover of $M$.
See \cite{EJ} for background on covers and precovers.

Familiar covering classes are closed under filtered colimits, for instance, injective modules over a right Noetherian ring \cite{Enochs81}, flat modules over any ring \cite{BEE}, and Gorenstein flat modules over any ring \cite[Corollary~3.12]{SS}.
Enochs proved that a precovering class closed under filtered colimits is covering \cite{Enochs81}; see also \cite[Corollary~5.2.7]{EJ}.
The converse is known as Enochs' conjecture.
\par\medskip\noindent
\textbf{Conjecture} \emph{Every covering class of modules is closed under filtered colimits.}
\par\medskip

The question is recorded in G\"obel--Trlifaj \cite[\S~5.4, Question~2, p.~130]{GT}.
H\"ugel--\v{S}aroch--Trlifaj \cite[\S~5]{AHT} attribute it to Enochs in the late 1990s; we follow Bazzoni--\v{S}aroch \cite[Introduction]{BS} in using the customary name \emph{Enochs' conjecture}.

Several cases of the conjecture are known.
Bazzoni--\v{S}aroch proved it for classes $\operatorname{Filt}(\mathcal S)$, consisting of modules filtered by a set $\mathcal S$ of $<\aleph_n$-presented modules for some fixed $n<{\aleph_0}$; under $V=L$, they proved it for every set $\mathcal S$ \cite[Corollaries~2.11 and~3.5]{BS}.
Iacob proved that the class of Gorenstein injective modules is covering if and only if it is closed under filtered colimits \cite[Theorem~3]{Iacob}.

This article gives a conditional counterexample.
\par\medskip\noindent
\textbf{Theorem} (see Theorem~\ref{thm:construction}). \emph{If no uncountable measurable cardinal exists, then there is an $\mathbb F_2$-algebra $\Lambda$ and a covering class in $\operatorname{Mod}_\Lambda$ which is not closed under colimits of countable chains.}
\par\medskip

The hypothesis that no uncountable measurable cardinal exists follows from $V=L$, an axiom that is independent of ZFC, provided ZFC is consistent; see \cite[\S~5]{Goldberg} for details.
\par\medskip\noindent
\textbf{Corollary.} \emph{Under $V=L$, there is a counterexample to Enochs' conjecture.}
\par\medskip
\begin{question}
\leavevmode
Exactly one of the following two questions has an affirmative answer; we do not know which one.
\begin{enumerate}
\item Can a counterexample to Enochs' conjecture be constructed in ZFC?
\item Is Enochs' conjecture independent of ZFC, provided ZFC is consistent?
\end{enumerate}
\end{question}

\section{Notation and preliminaries}
\paragraph{Notations of modules.}
Note that $\operatorname{Ext}_R^1(X,N)$ is the group of extensions $0\longrightarrow N\longrightarrow E\longrightarrow X\longrightarrow0$ modulo equivalence, with Baer sum as addition.
For $f:N\longrightarrow N'$ and $g:X'\longrightarrow X$, write $f_*:\operatorname{Ext}_R^1(X,N)\longrightarrow\operatorname{Ext}_R^1(X,N')$ and $g^*:\operatorname{Ext}_R^1(X,N)\longrightarrow\operatorname{Ext}_R^1(X',N)$ for pushout and pullback, respectively.
Direct products and direct sums are denoted by $M^I=\prod_{i\in I}M$ and $M^{(I)}=\bigoplus_{i\in I}M$, where $M^{(I)}$ is the submodule of finite-support tuples in $M^I$.
For a system of modules indexed by $J$, a filtered colimit $\varinjlim_JM$ is a colimit in which every finite diagram in $J$ admits a cocone.
Our example uses the chain $M_0\longrightarrow M_1\longrightarrow\cdots$.

\begin{fact}
For $X$ and $(N_i)_{i\in I}$ in $\operatorname{Mod}_R$, the projections $\pi_i:\prod_{j\in I}N_j\longrightarrow N_i$ induce a natural isomorphism
\begin{equation}\label{eq:extproduct}
 \operatorname{Ext}_R^1(X,\prod_{i\in I}N_i)\overset{\sim}{\longrightarrow}\prod_{i\in I}\operatorname{Ext}_R^1(X,N_i),\qquad \eta\longmapsto\bigl((\pi_i)_*\eta\bigr)_{i\in I}.
\end{equation}
\end{fact}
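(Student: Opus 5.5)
The plan is to use a short injective resolution of the product, so that each $\operatorname{Ext}^1$ group appears as a cokernel of $\operatorname{Hom}$ groups, and then use that $\operatorname{Hom}_R(X,-)$ preserves products while products in $\operatorname{Mod}_R$ are exact. Concretely, for each $i\in I$ choose an injective envelope (or any monomorphism into an injective) $N_i\hookrightarrow E_i$ and set $C_i=E_i/N_i$, giving short exact sequences $0\longrightarrow N_i\longrightarrow E_i\longrightarrow C_i\longrightarrow0$. Since products of modules are computed componentwise, the product sequence
\[
0\longrightarrow \prod_{i\in I}N_i\longrightarrow \prod_{i\in I}E_i\longrightarrow \prod_{i\in I}C_i\longrightarrow0
\]
is again exact, and $\prod_i E_i$ is injective because a product of injectives is injective (by the universal property of products, $\operatorname{Hom}_R(-,\prod_iE_i)\cong\prod_i\operatorname{Hom}_R(-,E_i)$ is exact).

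Applying $\operatorname{Hom}_R(X,-)$ and using $\operatorname{Ext}^1_R(X,\prod_iE_i)=0$, I get
\[
\operatorname{Ext}^1_R\Bigl(X,\prod_iN_i\Bigr)\cong\operatorname{coker}\Bigl(\operatorname{Hom}_R\bigl(X,\prod_iE_i\bigr)\longrightarrow\operatorname{Hom}_R\bigl(X,\prod_iC_i\bigr)\Bigr),
\]
and likewise $\operatorname{Ext}^1_R(X,N_i)\cong\operatorname{coker}(\operatorname{Hom}_R(X,E_i)\to\operatorname{Hom}_R(X,C_i))$. The canonical isomorphism $\operatorname{Hom}_R(X,\prod_i-)\cong\prod_i\operatorname{Hom}_R(X,-)$ identifies the first map with the product of the maps $\operatorname{Hom}_R(X,E_i)\to\operatorname{Hom}_R(X,C_i)$. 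Since cokernels of abelian groups commute with products (again because products of abelian groups are exact), the left side is isomorphic to $\prod_i\operatorname{Ext}^1_R(X,N_i)$.

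The remaining step, and the only one needing care, is to check that this abstract isomorphism agrees with the stated map $\eta\mapsto((\pi_i)_*\eta)_i$, and that the result does not depend on the choices and is natural in $X$ and in the family $(N_i)$. For agreement, I use that the connecting-map identification of $\operatorname{Ext}^1$ is natural with respect to the morphism of short exact sequences induced by $\pi_i$, namely from the product sequence to the $i$th sequence. Hence the $i$th component of the isomorphism is exactly $(\pi_i)_*$, where the pushout description of $(\pi_i)_*$ matches the functoriality of $\operatorname{Ext}^1_R(X,-)$ in the second variable under the Yoneda identification. Naturality in $X$ is then immediate from naturality of connecting maps. Alternatively, one can argue directly with extensions: surjectivity is obtained by taking the product of representative extensions $0\to N_i\to E_i'\to X\to 0$ and pulling back along the diagonal $X\to X^I$, and injectivity holds because compatible splittings $X\to E_i'$ assemble into a splitting into the product. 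I would keep this as a cross-check rather than the main route.
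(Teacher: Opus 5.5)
The paper states this as a standard fact and gives no proof, so there is no argument in the paper to compare against. Your main argument is correct and is the textbook one. The product of the injective sequences is exact and $\prod_iE_i$ is injective, so $\operatorname{Ext}^1_R(X,\prod_iN_i)$ is the cokernel of $\operatorname{Hom}_R(X,\prod_iE_i)\to\operatorname{Hom}_R(X,\prod_iC_i)$, which is the product of the componentwise cokernels. Naturality of the connecting map along the morphism of short exact sequences induced by $\pi_i$ shows that the $i$th component of this isomorphism is $(\pi_i)_*$. Because $\eta\mapsto((\pi_i)_*\eta)_i$ is given by a choice-free formula, independence of choices is automatic. Naturality in $X$ and in $(N_i)$ also follows directly from the formula: pushouts commute with pullbacks, and $\pi'_i\circ\prod_jf_j=f_i\circ\pi_i$.

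Your cross-check is worth keeping, since the paper defines $\operatorname{Ext}^1_R$ by Yoneda extensions and the direct argument avoids comparing with derived functors. However, its injectivity step as written assumes that an arbitrary $\eta$ is the diagonal pullback of a product of extensions. That is essentially what injectivity asserts, given the surjectivity computation. A clean version: write $\eta$ as $0\to\prod_iN_i\to F\to X\to0$ with inclusion $j$. Then $(\pi_i)_*\eta$ splits exactly when $\pi_i=r_ij$ for some $r_i:F\to N_i$. The $r_i$ assemble into a retraction $(r_i)_i:F\to\prod_iN_i$ of $j$, so $\eta=0$.
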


\paragraph{Ultrafilters.}
For a nonempty set $I$, write $\mathcal P(I)$ for its power set.
A \emph{filter} on $I$ is a family $\mathcal F\subseteq\mathcal P(I)$ containing $I$ but not $\varnothing$, closed under finite intersections and passage to supersets.
An \emph{ultrafilter} $\mathcal U$ is a filter maximal with respect to inclusion.
\begin{fact}\label{fact:ultrafilter}
  A filter $\mathcal F$ on $I$ is an ultrafilter if and only if for every $A\subseteq I$, either $A$ or $I\setminus A$ belongs to $\mathcal F$.
\end{fact}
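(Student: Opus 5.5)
We must show that a filter $\mathcal F$ on $I$ is maximal under inclusion among filters exactly when it contains, for each $A\subseteq I$, either $A$ or $I\setminus A$. The plan is to prove the two implications separately, working only from the definition of a filter (contains $I$, omits $\varnothing$, closed under finite intersections and supersets).

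For the direction ``$\Leftarrow$'', suppose $\mathcal F$ has the dichotomy property and $\mathcal G\supseteq\mathcal F$ is a filter. Take any $A\in\mathcal G$. If $A\notin\mathcal F$, the dichotomy gives $I\setminus A\in\mathcal F\subseteq\mathcal G$. Then $A\cap(I\setminus A)=\varnothing\in\mathcal G$, which is impossible. Hence $A\in\mathcal F$, so $\mathcal G=\mathcal F$ and $\mathcal F$ is maximal.

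For ``$\Rightarrow$'', suppose $\mathcal F$ is maximal and $A\subseteq I$ satisfies $A\notin\mathcal F$. We will show $I\setminus A\in\mathcal F$.
\begin{enumerate}
\item First we claim some $F_0\in\mathcal F$ satisfies $F_0\cap A=\varnothing$. Suppose instead that $F\cap A\neq\varnothing$ for every $F\in\mathcal F$.
\item Under that assumption, set $\mathcal G=\{B\subseteq I: B\supseteq F\cap A \text{ for some } F\in\mathcal F\}$.
\item This $\mathcal G$ is a filter. It contains $I$ and is upward closed. It is closed under finite intersections because $(F\cap A)\cap(F'\cap A)=(F\cap F')\cap A$. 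It omits $\varnothing$ by the assumption.
\item Moreover $\mathcal G\supseteq\mathcal F$, and $A\in\mathcal G$ (take $F=I$).
\item So $\mathcal G$ strictly contains $\mathcal F$, contradicting maximality, and the claim holds.
\item Finally, $F_0\cap A=\varnothing$ means $F_0\subseteq I\setminus A$. Upward closure then gives $I\setminus A\in\mathcal F$.
\end{enumerate}

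The only step requiring care is checking that the generated family $\mathcal G$ is a proper filter. This needs the intersection identity and the nonemptiness hypothesis, and both are immediate, so there is no real obstacle.
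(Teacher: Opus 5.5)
The paper states this as a standard Fact and gives no proof, so there is nothing to compare against; your argument is correct and complete, and it is the standard one. In the ``$\Rightarrow$'' direction you correctly use maximality against the filter generated by $\mathcal F\cup\{A\}$, which is proper exactly because every $F\in\mathcal F$ meets $A$; in the ``$\Leftarrow$'' direction you correctly use that no filter contains both $A$ and $I\setminus A$, which also shows that exactly one of the two lies in $\mathcal F$.
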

An ultrafilter $\mathcal U$ is \emph{principal} if $\mathcal U=\{A\subseteq I\mid i\in A\}$ for some $i\in I$, and \emph{nonprincipal} otherwise.
Note that principal ultrafilters are ultrafilters.
For an infinite cardinal $\kappa$, an ultrafilter is \emph{$\kappa$-complete} if it is closed under intersections of fewer than $\kappa$ members.
Let ${\aleph_0}$ be the countable cardinal. 
Then all filteres are ${\aleph_0}$-complete.

\paragraph{Constructible sets.}
For a set $A$, let $\operatorname{Def}(A)$ consist of the subsets $\{x\in A\mid (A,\in)\models\varphi(x,a_1,\ldots,a_n)\}$, where $\varphi$ is a first-order formula and $a_1,\ldots,a_n\in A$.
Define the cumulative and constructible hierarchies:
\begin{itemize}
  \item $V_0=\varnothing$, $V_{\alpha+1}=\mathcal P(V_\alpha)$ the power set, and $V_\lambda=\bigcup_{\alpha<\lambda}V_\alpha$ for limit ordinals $\lambda$.
  \item $L_0=\varnothing$, $L_{\alpha+1}=\operatorname{Def}(L_\alpha)$ the set of definable subsets, and $L_\lambda=\bigcup_{\alpha<\lambda}L_\alpha$ for limit ordinals $\lambda$.
\end{itemize}
Put $V=\bigcup_{\alpha\in\mathrm{Ord}}V_\alpha$ and $L=\bigcup_{\alpha\in\mathrm{Ord}}L_\alpha$.
Thus $V=L$ says that every set is constructible.
G\"odel proved that $L$ is an inner model of ZFC satisfying $V=L$.
Cohen's forcing gives the relative consistency of $V\ne L$.
Together these imply the independence \cite[\S~5]{Goldberg}, stated in the introduction.

\paragraph{Measurable cardinals.}
An uncountable cardinal $\kappa$ is \emph{measurable} if it carries a $\kappa$-complete nonprincipal ultrafilter.
Scott proved that $V=L$ implies the nonexistence of uncountable measurable cardinals \cite{Scott}.

  \begin{lemma}\label{lem:complete-ultrafilters}
  If no uncountable measurable cardinal exists, then every $\aleph_1$-complete ultrafilter is principal.
  \end{lemma}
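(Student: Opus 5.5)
We argue by contraposition: assuming a nonprincipal $\aleph_1$-complete ultrafilter $\mathcal U$ on some set $I$, we produce an uncountable measurable cardinal. The candidate is the least cardinal $\kappa$ for which $\mathcal U$ fails to be $\kappa^+$-complete. Equivalently, $\kappa$ is the least cardinality of a family $\{A_\alpha\}_{\alpha<\kappa}\subseteq\mathcal U$ whose intersection is not in $\mathcal U$.

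\emph{Step 1: $\kappa$ exists and is uncountable.} Since $\mathcal U$ is nonprincipal, for each $i\in I$ the set $I\setminus\{i\}$ lies in $\mathcal U$; otherwise $\{i\}\in\mathcal U$ by Fact~\ref{fact:ultrafilter}, making $\mathcal U$ principal. Then $\bigcap_{i\in I}(I\setminus\{i\})=\varnothing\notin\mathcal U$, so a failing family of size $|I|$ exists and $\kappa\le|I|$ is well defined. By $\aleph_1$-completeness, countable intersections stay in $\mathcal U$, hence $\kappa\ge\aleph_1$. By minimality, $\mathcal U$ is $\kappa$-complete.

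\emph{Step 2: push $\mathcal U$ forward to $\kappa$.} Fix $A_\alpha\in\mathcal U$ for $\alpha<\kappa$ with $\bigcap_\alpha A_\alpha\notin\mathcal U$. Replacing $I$ by the set $I\setminus\bigcap_\alpha A_\alpha$, which lies in $\mathcal U$, and each $A_\alpha$ by its intersection with that set, we may assume $\bigcap_\alpha A_\alpha=\varnothing$. Define $f:I\to\kappa$ by $f(i)=\min\{\alpha: i\notin A_\alpha\}$, and set $\mathcal V=\{X\subseteq\kappa: f^{-1}(X)\in\mathcal U\}$. Routine checks show that $\mathcal V$ is an ultrafilter on $\kappa$ and inherits $\kappa$-completeness from $\mathcal U$, because preimages commute with intersections and complements.

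\emph{Step 3: $\mathcal V$ is nonprincipal.} This is the main point. For $\beta<\kappa$, we have $f^{-1}(\beta+1)=\{i: i\notin A_\alpha\text{ for some }\alpha\le\beta\}=I\setminus\bigcap_{\alpha\le\beta}A_\alpha$. Since $|\beta+1|<\kappa$, the intersection $\bigcap_{\alpha\le\beta}A_\alpha$ lies in $\mathcal U$ by $\kappa$-completeness. Hence $f^{-1}(\beta+1)\notin\mathcal U$, and in particular $f^{-1}(\{\beta\})\notin\mathcal U$, so $\{\beta\}\notin\mathcal V$. Thus $\mathcal V$ is a $\kappa$-complete nonprincipal ultrafilter on the uncountable cardinal $\kappa$. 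So $\kappa$ is measurable, contradicting the hypothesis.

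The only subtlety is handling $\beta+1$ when $\kappa$ is finite, which cannot happen because $\kappa\ge\aleph_1$, and the case $\beta+1=\kappa$, which cannot happen since $\kappa$ is an infinite cardinal and hence a limit ordinal. I expect no real obstacle beyond setting up the minimal failing family correctly.
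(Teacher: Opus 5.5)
Your proof is correct and is essentially the paper's argument in dual form. The paper takes $\kappa$ to be the least size of a partition of $I$ with no part in $\mathcal U$ and pushes $\mathcal U$ forward along that partition; your fibers $f^{-1}(\{\beta\})=\bigcap_{\alpha<\beta}A_\alpha\setminus A_\beta$ form exactly such a partition, so only the bookkeeping differs: for you $\kappa$-completeness is immediate from minimality and nonprincipality needs Step 3, while in the paper nonprincipality is immediate and $\kappa$-completeness needs the disjointification claim.
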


  \begin{proof}
  Suppose that $\mathcal U$ is an $\aleph_1$-complete nonprincipal ultrafilter on a set $I$.
  Let $\kappa$ be the least cardinality of a partition of $I$ for which none of the parts belong to $\mathcal U$.
  Such a partition exists because every singleton lies outside $\mathcal U$.
  By definition of $\aleph_1$-completeness, one has $\kappa>\aleph_0$.

  \emph{We claim that a union of fewer than $\kappa$ sets outside $\mathcal U$ also lies outside $\mathcal U$:
  take $\lambda<\kappa$ with $B_\alpha \in \mathcal{P}(I) \setminus \mathcal U$, 
  and it suffices to show that $\bigcup_{\alpha<\lambda}B_\alpha\notin\mathcal U$.}
  Otherwise, put
  \[
  C_\alpha=B_\alpha\setminus\bigcup_{\beta<\alpha}B_\beta,
  \qquad
  D=I\setminus\bigcup_{\alpha<\lambda}B_\alpha.
  \]
  Then $I=D\sqcup\bigsqcup_{\alpha<\lambda}C_\alpha$ is a partition of $I$ with $D\notin\mathcal U$ and $C_\alpha\notin\mathcal U$.
  Since $|\lambda|+1<\kappa$, this gives fewer than $\kappa$ parts, contradicting minimality.
  This proves the claim.

  Fix such partition $(A_\alpha)_{\alpha<\kappa}$ witnessing the definition of $\kappa$, and put $\mathcal V=\left\{S\subseteq\kappa\mid\bigcup_{\alpha\in S}A_\alpha\in\mathcal U\right\}$.
  By construction $\mathcal V$ is a filter, and for each $S \subseteq \kappa$ either $S$ or $\kappa \setminus S$ belongs to $\mathcal V$.  
  By Fact~\ref{fact:ultrafilter}, $\mathcal V$ is an ultrafilter on $\kappa$.
  It is nonprincipal, since $A_\alpha\notin\mathcal U$ for every $\alpha<\kappa$.
  By the claim above, $\mathcal V$ is $\kappa$-complete.
  Thus $\kappa$ is an uncountable measurable cardinal, a contradiction.
  This completes the proof.
  \end{proof}

\section{Main construction}

\paragraph{Setup.}

Put $k=\mathbb F_2$ and $K=k^{({\aleph_0})}$, viewed as finite-support row vectors.
Put
\[
 R=\left\{(r_{ij})\in k^{{\aleph_0}\times{\aleph_0}}\mid \{j\in{\aleph_0}\mid r_{ij}\ne0\}\text{ is finite for every }i\in{\aleph_0}\right\},\qquad \Lambda=\begin{psmallmatrix}R&0\\R&R\end{psmallmatrix}.
\]
Thus $R$ is the ring of row-finite matrices over $k$, and $K$ is a right $R$-module.

\begin{fact}
Let $\operatorname{Mod}_R^{\to}$ denote the morphism category of $\operatorname{Mod}_R$, whose objects are module maps and whose morphisms are commutative squares.
There is an equivalence of categories $\operatorname{Mod}_R^{\to}\xlongrightarrow{\sim}\operatorname{Mod}_\Lambda$, where an arrow $a:X\longrightarrow Y$ is sent to the $\Lambda$-module $Y\oplus X$ with action $(y,x)\begin{psmallmatrix}r&0\\s&t\end{psmallmatrix}=(yr+a(x)s,xt)$.
\end{fact}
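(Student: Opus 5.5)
The plan is to write down an explicit functor $F:\operatorname{Mod}_R^{\to}\longrightarrow\operatorname{Mod}_\Lambda$ by the stated formula, an explicit functor $G$ in the opposite direction built from the idempotents of $\Lambda$, and to check that $GF$ and $FG$ are naturally isomorphic to the identities. Throughout, write $e_1=\begin{psmallmatrix}1&0\\0&0\end{psmallmatrix}$, $e_2=\begin{psmallmatrix}0&0\\0&1\end{psmallmatrix}$ and $e_{21}=\begin{psmallmatrix}0&0\\1&0\end{psmallmatrix}$. These lie in $\Lambda$ because $R$ is unital, and $e_1+e_2=1$.

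\emph{Step 1: $F$ is well defined.} For an arrow $a:X\longrightarrow Y$, I first check that the formula $(y,x)\begin{psmallmatrix}r&0\\s&t\end{psmallmatrix}=(yr+a(x)s,\,xt)$ defines a right $\Lambda$-module structure on $Y\oplus X$. Additivity in each variable and the unit axiom are immediate. For associativity I multiply by $\lambda=\begin{psmallmatrix}r&0\\s&t\end{psmallmatrix}$ and then by $\lambda'=\begin{psmallmatrix}r'&0\\s'&t'\end{psmallmatrix}$. The first coordinate of the result is $yrr'+a(x)sr'+a(xt)s'$, and the $R$-linearity of $a$ rewrites $a(xt)s'$ as $a(x)ts'$. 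This is exactly the first coordinate of $(y,x)\lambda\lambda'$, since $\lambda\lambda'=\begin{psmallmatrix}rr'&0\\sr'+ts'&tt'\end{psmallmatrix}$. The second coordinates agree trivially. A commutative square $(g:Y\to Y',\,f:X\to X')$ with $a'f=ga$ is sent to $g\oplus f$. This map is $\Lambda$-linear precisely because $a'f=ga$, and functoriality is clear.

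\emph{Step 2: the inverse functor $G$.} Given a right $\Lambda$-module $M$, put $Y=Me_1$ and $X=Me_2$. Then $M=Y\oplus X$ as abelian groups. I make $Y$ and $X$ into right $R$-modules through the corner embeddings $r\mapsto\begin{psmallmatrix}r&0\\0&0\end{psmallmatrix}$ and $t\mapsto\begin{psmallmatrix}0&0\\0&t\end{psmallmatrix}$; these are unital ring maps $R\to e_1\Lambda e_1$ and $R\to e_2\Lambda e_2$. Define $a:X\longrightarrow Y$ by $a(x)=xe_{21}$. This lands in $Y$ because $e_{21}e_1=e_{21}$. It is $R$-linear, since
\[
a(xt)=x\begin{psmallmatrix}0&0\\0&t\end{psmallmatrix}e_{21}=x\begin{psmallmatrix}0&0\\t&0\end{psmallmatrix}=xe_{21}\begin{psmallmatrix}t&0\\0&0\end{psmallmatrix}=a(x)t.
\]
A $\Lambda$-linear map $\varphi$ commutes with the $e_i$, so it restricts to $Y\to Y'$ and $X\to X'$ and commutes with $e_{21}$, which gives a commutative square. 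This defines $G$ on morphisms.

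\emph{Step 3: the natural isomorphisms.} $GF\cong\mathrm{id}$ holds on the nose: for $F(a)$ one computes $(Y\oplus X)e_1=Y\oplus0$, $(Y\oplus X)e_2=0\oplus X$, and $(0,x)e_{21}=(a(x),0)$. For $FG\cong\mathrm{id}$, the map $M\longrightarrow Me_1\oplus Me_2$, $m\mapsto(me_1,me_2)$, is an isomorphism of groups. It is $\Lambda$-linear because every $\lambda=\begin{psmallmatrix}r&0\\s&t\end{psmallmatrix}$ decomposes as a sum of its three corner pieces, and these act by the formula of Step 1: $m\begin{psmallmatrix}0&0\\s&0\end{psmallmatrix}=me_2e_{21}\begin{psmallmatrix}s&0\\0&0\end{psmallmatrix}=a(me_2)s$. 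Naturality in $M$ is evident. Hence $F$ is an equivalence.

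The only real obstacle is the bookkeeping. One must be careful that the $R$-linearity of $a$ is exactly what makes associativity hold and what makes $xe_{21}$ compatible with the two corner actions. Apart from that, the argument is the standard description of modules over a triangular matrix ring.
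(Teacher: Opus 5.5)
Your proof is correct. The paper states this as a standard Fact about modules over the lower triangular matrix ring $\Lambda$ and gives no proof; your quasi-inverse $M\longmapsto\bigl(Me_2\longrightarrow Me_1,\ x\longmapsto xe_{21}\bigr)$, together with your checks that $R$-linearity of $a$ gives associativity and that $g\oplus f$ is $\Lambda$-linear exactly when the square commutes, is the standard verification the paper implicitly relies on (only cosmetically, the paper later writes $e_n=E_{nn}$ for idempotents of $R$, so your $e_1,e_2\in\Lambda$ should be renamed).
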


Put $\mathcal C=\{a:X\longrightarrow Y\mid a^*:\operatorname{Ext}_R^1(Y,K)\longrightarrow\operatorname{Ext}_R^1(X,K) \text{ is a zero map}\}$; the main theorem follows:
\begin{theorem}\label{thm:construction}
If no uncountable measurable cardinal exists, then the class $\mathcal C$ is covering in $\operatorname{Mod}_\Lambda$ but $\mathcal{C}$ is not closed under colimits of countable chains.
\end{theorem}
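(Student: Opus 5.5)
The plan is to first rewrite membership in $\mathcal C$ as a lifting property. After that, precovers come for free, a countable chain of projective-sourced arrows gives the failure of closure, and the real work (where Lemma~\ref{lem:complete-ultrafilters} enters) is upgrading precovers to covers.

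\emph{Step 1: reformulating $\mathcal C$.} For $Y\in\operatorname{Mod}_R$ put $E_Y=\operatorname{Ext}_R^1(Y,K)$. By \eqref{eq:extproduct} there is a class $\xi_Y\in\operatorname{Ext}_R^1(Y,K^{E_Y})$ whose $e$-th component is $e$. Let
\[
0\longrightarrow K^{E_Y}\longrightarrow \widehat Y\overset{q_Y}{\longrightarrow}Y\longrightarrow 0
\]
represent $\xi_Y$. For $a:X\longrightarrow Y$, the class $a^*\xi_Y$ has components $(a^*e)_{e}$, again by \eqref{eq:extproduct}. Hence $a^*=0$ if and only if $a^*\xi_Y=0$, which holds if and only if $a$ lifts through $q_Y$. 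Two consequences follow. First, $\mathcal C$ is closed under pre- and post-composition with arbitrary maps, that is, it is an ideal of arrows. Second, $\mathcal C$ is exactly the class of arrows that factor through some $q_Y$.

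\emph{Step 2: precovers (ZFC).} Given an object $b:X\longrightarrow Y$ of $\operatorname{Mod}_\Lambda$, form the pullback $P=X\times_Y\widehat Y$ and let $c:P\longrightarrow\widehat Y\overset{q_Y}{\longrightarrow}Y$ be the composite. Then $c\in\mathcal C$, and $(P\to X,\mathrm{id}_Y)$ is a morphism $c\to b$. Suppose $(f,g)$ is a morphism from some $a''\in\mathcal C$ to $b$. By the ideal property $g a''\in\mathcal C$, so $g a''$ lifts through $q_Y$. Together with $f$, this lift maps the source of $a''$ into $P$, so every such morphism factors through $c\to b$. Thus $\mathcal C$ is precovering.

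\emph{Step 3: failure of closure under countable chains (ZFC).} Since $R$ is von Neumann regular, every $R$-module is flat, and every countably presented module $Y$ is a colimit $R\overset{\cdot r_0}{\longrightarrow}R\overset{\cdot r_1}{\longrightarrow}\cdots$ of free modules of finite rank. I will take the chain of arrows $Y_n\longrightarrow Y_{n+1}$ given by the transition maps. Each has projective source and so lies in $\mathcal C$, while the colimit arrow is $\mathrm{id}_Y$. Therefore it suffices to exhibit a countably presented $Y$ with $\operatorname{Ext}_R^1(Y,K)\neq0$. My first candidate is $Y=K$ itself, or a suitable cyclic non-projective module $R/I$ with $I$ countably generated. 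I would compute the Ext group using the two-term free resolution coming from the countable presentation; its nonvanishing should reduce to the non-surjectivity of a map of the form $K^{\aleph_0}\to K^{\aleph_0}$ built from a shift.

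\emph{Step 4: covers — the main obstacle.} The precover $c$ from Step 2 is far from minimal, so I need to show that it splits as a cover plus an irrelevant summand. I would try to extract a minimal direct summand of $\widehat Y$, namely the part of $K^{E_Y}$ that is actually needed. The key input is control of $\operatorname{Hom}_R(K^{I},K)$ and $\operatorname{End}_R(K^I)$. I expect the hypothesis to give a slenderness-type statement: every map $K^I\to K$ factors through a finite subproduct. The reason is that a map not doing so would yield an ultrafilter on $I$, namely the sets $A$ with $\phi$ nonvanishing on $K^{A}$. Because $k=\mathbb F_2$ and $K$ has only countably many coordinates, this ultrafilter should be $\aleph_1$-complete, hence principal by Lemma~\ref{lem:complete-ultrafilters}, a contradiction. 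With this description of maps out of products, an endomorphism $u$ of the precover with $cu=c$ becomes a "locally finite" perturbation of the identity on $K^{E_Y}$. I would then perform a Zorn or Bass-type reduction on $E_Y$ (e.g.\ restricting to a basis-like subset of $E_Y$ over $k$) to obtain a summand on which every such $u$ is an automorphism. This last step is where I expect the difficulty: verifying the countable-completeness of the induced ultrafilter and turning slenderness into invertibility of $u$.
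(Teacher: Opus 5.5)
\textbf{Genuine gap: Step 4 (the covering property).} Your Steps 1--2 are correct and give precovering in ZFC. The fact $q_Y\in\mathcal C$ that you use follows from exactness: the connecting map sends $\pi_e\mapsto(\pi_e)_*\xi_Y=e$, so it is onto. The chain of transition maps in Step 3 is a legitimate alternative to the paper's chain of identities $1_{M_m}$. The real problem is that Step 4 carries the whole content of the theorem, and you leave it as a hoped-for ``Zorn or Bass-type reduction''.

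The missing idea is the choice of index set. Pick $\zeta_i\in\operatorname{Ext}_R^1(Y,K)$ so that the classes $\xi_i=a^*\zeta_i$ form a $k$-basis of the image $S_a=\operatorname{im}a^*$; a basis of $E_Y$ is not enough. Let $\theta\in\operatorname{Ext}_R^1(Y,K^I)$ have components $\zeta_i$, and take $\eta=a^*\theta$, represented by $0\to K^I\overset{j}{\to}E\overset{p}{\to}X\to0$.

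Linear independence of the components $(\pi_i)_*\eta=\xi_i$ is exactly what is needed. Any dependence produces a non-invertible $\alpha$ with $\alpha_*\eta=\eta$; for a zero component at $e$, take $\alpha=1-\iota_e\pi_e$. This gives a non-invertible $s$ with $ps=p$. In particular, your $X\times_Y\widehat Y$ has zero component at $e=0\in E_Y$, so it is not a cover once $E_Y\neq0$.

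With independent components, slenderness $\operatorname{Hom}_R(K^I,K)=\bigoplus_ik\pi_i$ makes $\partial_\eta\colon f\mapsto f_*\eta$ injective; this is \eqref{eq:connecting}. The endomorphisms to test are pairs $(s,t)$ with $(p,1_Y)(s,t)=(p,1_Y)$, that is, $t=1_Y$ and $ps=p$ (not ``$cu=c$''). Then $sj=j\alpha$ with $\alpha_*\eta=\eta$. Hence $\partial_\eta(\pi_i\alpha)=(\pi_i)_*\alpha_*\eta=\partial_\eta(\pi_i)$, which gives $\pi_i\alpha=\pi_i$ for all $i$. So $\alpha=1$ exactly, not a ``locally finite perturbation'', and $s$ is invertible by the five lemma. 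No reduction argument is needed.

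\textbf{Secondary problems: the slenderness sketch and Step 3.} Your sketch of slenderness is not correct as stated. The family $\{A\subseteq I\mid\phi|_{K^A}\neq0\}$ is not a filter: for $\phi=\pi_1+\pi_2$ it contains the disjoint sets $\{1\}$ and $\{2\}$. Also, countable completeness does not come from $k=\mathbb F_2$. Instead, work with the ideal $\mathcal N$ of sets on which $\phi$ vanishes, and prove two things.
\begin{enumerate}
\item $\mathcal N$ is closed under countable unions, because any map $h\colon\prod_nM_n\to K$ killing $\bigoplus_nM_n$ is zero. To see this, normalize $h(x)=v_0$ by Lemma~\ref{lem:transitive}. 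The element $z$ with $z_j=\sum_{n\le j}x_jE_{0n}$ satisfies $ze_m-xE_{0m}\in\bigoplus_nM_n$, which forces $h(z)e_m=v_m$ for every $m$, contradicting finite support.
\item $\mathcal P(I)/\mathcal N$ is finite. Otherwise choose disjoint $J_n\notin\mathcal N$ and elements $y_n\in K^{J_n}$ with $h(y_n)=v_n$ and $y_ne_n=y_n$; gluing them gives $y$ with $h(y)$ of infinite support.
\end{enumerate}
Each atom then gives a countably complete ultrafilter, which is principal by Lemma~\ref{lem:complete-ultrafilters}. So $\phi$ factors through $K^F$ for a finite $F$, and Lemma~\ref{lem:scalars} supplies the coefficients.

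Finally, Step 3 is unfinished, and your first candidate fails: $K\cong e_0R$ is projective, so $\operatorname{Ext}_R^1(K,K)=0$. Use $Y=R/J$ with $J=\bigoplus_ne_nR$ instead. Then $\operatorname{Hom}_R(J,K)\cong\prod_nKe_n=k^{\aleph_0}$, and restriction from $\operatorname{Hom}_R(R,K)=K$ is the inclusion $k^{(\aleph_0)}\hookrightarrow k^{\aleph_0}$. Hence $\operatorname{Ext}_R^1(Y,K)\cong k^{\aleph_0}/k^{(\aleph_0)}\neq0$. Since $Y=\varinjlim_mR/J_m$ with each $R/J_m$ projective, your chain argument then applies.
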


\paragraph{The key proposition.}

Let $(v_n)_{n<{\aleph_0}}$ be the basis of $K$.
For each $m$ and $n$ in ${\aleph_0}$, let $E_{mn} \in R$ be the matrix with its only nonzero entry $1$ in position $(m,n)$.
Put $e_n=E_{nn}$.

\begin{lemma}\label{lem:transitive}
  For each nonzero $x,y \in K$, there exists a unit $u \in R$ such that $xu = y$.
\end{lemma}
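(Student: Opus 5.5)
The plan is to reduce to the case $y=v_0$: if for every nonzero $x$ there is a unit $u_x\in R$ with $xu_x=v_0$, then for nonzero $x,y$ the element $u=u_xu_y^{-1}$ is a unit and satisfies $xu=v_0u_y^{-1}=y$. So it suffices to move an arbitrary nonzero $x$ to $v_0$ by a unit of $R$. Recall that $K$ consists of finite-support row vectors and $R$ acts by right multiplication. Row-finiteness ensures $xr$ is a finite-support vector, since $xr=\sum_i x_i(\text{row } i\text{ of } r)$ is a finite sum of finite rows.

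Write $x=\sum_{i\in S}v_i$ with $S$ finite and nonempty, since $k=\mathbb F_2$. I would build $u_x$ as a product of two units.

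\textbf{Step 1, permutation.} Pick $m\in S$ and let $P$ be the permutation matrix swapping indices $0$ and $m$. It is row-finite, and $P^2=1$, so $P$ is a unit. Replacing $x$ by $xP$, we may assume $0\in S$.

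\textbf{Step 2, elementary transvection.} Put $N=\sum_{i\in S\setminus\{0\}}E_{0i}$ and $T=1+N$. Then $T$ is row-finite, since only row $0$ differs from the identity and it has finitely many nonzero entries. Moreover $N^2=0$, because $E_{0i}E_{0j}=0$ for $i\neq0$, so $T^2=1$ over $\mathbb F_2$ and $T$ is a unit.

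Now compute $v_0T=v_0+\sum_{i\in S\setminus\{0\}}v_i=x$. Hence $xT^{-1}=xT=v_0$. Combining the two steps, $u_x=PT$ (with the appropriate $T$ for $xP$) sends $x$ to $v_0$.

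The main point to check carefully, and the only possible obstacle, is that the chosen matrices really lie in $R$ and are invertible \emph{in} $R$, not merely in some larger matrix ring. For $P$ and $T$ this is immediate, because each is its own inverse and all rows are finite. I would also double-check the convention that vectors act on the left, so that $v_0T$ equals row $0$ of $T$. With that confirmed, the lemma follows.
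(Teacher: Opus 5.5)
Your proof is correct and is essentially the paper's argument. The paper takes a unit of the form $\begin{psmallmatrix}u_0&0\\0&1\end{psmallmatrix}$, with $u_0$ an automorphism of the finite-dimensional span $K_N=\bigoplus_{n<N}kv_n$ sending $x$ to $y$ (obtained by extending $x$ and $y$ to bases). Your $P$ and $T$ are explicit units of the same finite-block-plus-identity form, with your reduction to $y=v_0$ replacing the basis-extension step; the only cosmetic difference is that your normal form $x=\sum_{i\in S}v_i$ relies on $k=\mathbb F_2$, whereas the paper's argument works over any field.
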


\begin{proof}
  Take $N$ such that $x_n = y_n = 0$ for all $n \ge N$.
  Then one can extend $x$ and $y$ to bases of the finite-dimensional subspace $K_N = \bigoplus_{n<N} k v_n$, respectively.
  Then one can take $u = \begin{psmallmatrix}u_0 & 0 \\ 0 & 1\end{psmallmatrix}$ as a diagonal matrix with $u_0$ an automorphism of $K_N$ sending $x$ to $y$.
  Thus $u$ is a unit in $R$ sending $x$ to $y$.
\end{proof}

\begin{lemma}\label{lem:scalars}
  The map $k \longrightarrow \operatorname{End}_R(K)$ sending each $c\in k$ to its scalar is an isomorphism of rings.
\end{lemma}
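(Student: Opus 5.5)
The plan is to show first that the scalar map is an injective ring homomorphism, and then that it is surjective by proving that every $R$-endomorphism of $K$ is multiplication by some $c\in k$.

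The first part is routine. Since $K$ is a $k$-vector space and $R$ is a $k$-algebra whose scalars are central, each $c\in k$ gives an $R$-linear map $x\mapsto cx$. This assignment respects sums, products and the unit. It is injective because $K\neq 0$.

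For surjectivity, fix $f\in\operatorname{End}_R(K)$ and use the idempotents $e_n=E_{nn}$ and the matrix units $E_{mn}$.

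\begin{enumerate}
\item The row vector $v_n$ satisfies $v_n e_n=v_n$. For any $x\in K$, the product $x e_n$ equals $x_n v_n$, so $Ke_n=kv_n$.
\item Applying $f$ gives $f(v_n)=f(v_ne_n)=f(v_n)e_n\in kv_n$. Write $f(v_n)=c_nv_n$ with $c_n\in k$.
\item Next, $v_mE_{mn}=v_n$. Applying $f$ yields
\[
c_nv_n=f(v_n)=f(v_m)E_{mn}=c_mv_mE_{mn}=c_mv_n,
\]
so $c_n=c_m=:c$ for all $m,n$.
\item Since $f$ is additive and every element of $K$ is a finite sum of basis vectors over $k=\mathbb F_2$, we get $f(x)=cx$ for all $x\in K$.
\end{enumerate}

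Lemma~\ref{lem:transitive} gives an alternative route to the third step. Choose a unit $u$ with $v_mu=v_n$ and argue the same way. The direct computation with $E_{mn}$ is simpler, so I would use that instead.

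There is no genuine obstacle in this proof. The only point needing care is the right-module convention: $x e_n$ picks out the $n$-th coordinate of the row vector $x$, and $v_mE_{mn}=v_n$. These matrices lie in $R$ because each of their rows has finite support.
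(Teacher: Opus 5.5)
Your proposal is correct and follows the paper's proof essentially verbatim: the idempotents $e_n$ force $f(v_n)=c_nv_n$, and the matrix units $E_{mn}$ force all the $c_n$ to coincide. You also spell out two points the paper leaves implicit, namely injectivity of the scalar map and the extension from basis vectors to all of $K$ by additivity.
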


\begin{proof}
  The map is clearly injective.
  Now fix an arbitrary $f \in \mathrm{End}_R(K)$.
  For each $n < {\aleph_0}$, one has $f(v_n) = f(v_n e_n) = f(v_n) e_n$.
  Thus $f(v_n)$ is a scalar multiple of $v_n$, denoted by $c_n v_n$ for some $c_n \in k$.
  For any $m \neq n$, since $v_m E_{mn}=v_n$, one has $c_nv_n=f(v_n)=f(v_mE_{mn})=f(v_m)E_{mn}=c_mv_n$.
  Hence $c_n=c_m$ for all $m,n<{\aleph_0}$, and $f$ is a scalar multiplication.
  This completes the proof.
\end{proof}

\begin{proposition}[The key proposition]\label{prop:finite-coordinates}
If no uncountable measurable cardinal exists, then every $R$-module homomorphism $K^I\longrightarrow K$ is uniquely of the form $\sum_{i\in I}c_i\pi_i$, where $(c_i)_{i\in I}\in k^{(I)}$.
\end{proposition}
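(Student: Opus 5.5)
The plan is to split the statement into a ``finitely many coordinates'' part and a ``vanishing'' part. The vanishing part uses an $\aleph_1$-complete ultrafilter, which Lemma~\ref{lem:complete-ultrafilters} forces to be principal. Let $f:K^I\longrightarrow K$ be $R$-linear and write $\iota_i:K\longrightarrow K^I$ for the coordinate inclusions.

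\emph{Step 1: coefficients.} By Lemma~\ref{lem:scalars}, $f\iota_i=c_i\cdot\mathrm{id}_K$ for a unique $c_i\in k$. This also gives uniqueness in the statement: if $\sum c_i\pi_i=\sum c'_i\pi_i$ with finitely supported families, composing with $\iota_i$ yields $c_i=c'_i$.

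\emph{Step 2: finite support.} Suppose infinitely many $c_i\neq0$, and choose distinct indices $i_0,i_1,\dots$ with $c_{i_n}\ne0$. Consider $x\in K^I$ with $x_{i_n}=v_n$ and all other coordinates $0$. Let $P_N=\sum_{m\ge N}e_m$, which is diagonal and hence row-finite, so $P_N\in R$. Then $xP_N$ is $x$ with the coordinates $i_0,\dots,i_{N-1}$ killed. Hence
\[
f(x)P_N-f(x)P_M=f(xP_N)-f(xP_M)=\sum_{N\le n<M}c_{i_n}v_n .
\]
Since $f(x)\in K$ has finite support, the left side vanishes for $N$ large, which forces $c_{i_n}=0$ for large $n$, a contradiction. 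Thus $(c_i)\in k^{(I)}$. Replacing $f$ by $g=f-\sum c_i\pi_i$, it remains to show that any $g$ vanishing on $K^{(I)}$ is zero.

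\emph{Step 3: the ultrafilter.} For $A\subseteq I$ identify $K^A$ with the summand of $K^I$ supported on $A$, so that $K^I=K^A\oplus K^{I\setminus A}$. Put $\mathcal F=\{A\subseteq I\mid g|_{K^{I\setminus A}}=0\}$.
\begin{itemize}
\item $\mathcal F$ is closed under supersets.
\item $\mathcal F$ is closed under finite intersections, because $K^{I\setminus(A\cap B)}=K^{I\setminus A}+K^{I\setminus B}$.
\item $\mathcal F$ misses every finite set, because $g$ kills $K^{(I)}$. Consequently, if $g\neq0$ then $\varnothing\notin\mathcal F$ and $\mathcal F$ is a nonprincipal filter.
\end{itemize}
To obtain an ultrafilter, I would adapt the partition argument of Lemma~\ref{lem:complete-ultrafilters}. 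The central claim is that there is no countable partition $I=\bigsqcup_n J_n$ with $g|_{K^{J_n}}\neq0$ for every $n$. Granting this claim, one can refine to an $\aleph_1$-complete ultrafilter in the usual {\L}o\'s-type way. Roughly: consider the subsets $A$ on which $g|_{K^A}\neq0$; choose a minimal ``nonsplitting'' piece of this family, on which exactly one side of every bipartition carries $g$; and take the resulting sets as $\mathcal U$. The claim yields $\aleph_1$-completeness. Lemma~\ref{lem:complete-ultrafilters} then makes $\mathcal U$ principal, say at $i$. Hence $g$ factors through $K^{\{i\}}$, so $g=g\iota_i\pi_i=0$, a contradiction.

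\emph{Main obstacle.} The hard step is the countable claim in Step 3, a slenderness statement for $K$ relative to arbitrary blocks $K^{J_n}$. First, pick $z_n\in K^{J_n}$ with $g(z_n)\neq0$. Using Lemma~\ref{lem:transitive} and the fact that $g$ kills $K^{(I)}$, I would try to arrange that $g(z_n)$ can be moved into ``position $n$'' by right multiplication with suitable elements of $R$. I would then assemble $z=\sum_n z_nE_{?n}$-type twists so that the block-$n$ part of $z$ is tied to the basis vector $v_n$, and rerun the $P_N$ tail argument of Step 2. The difficulty is that $z_n$ has uncountably many coordinates. The matrix acting on $z$ must therefore be chosen uniformly, depending only on $n$, so that multiplying by $P_N$ truncates exactly the blocks $J_0,\dots,J_{N-1}$. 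I expect this bookkeeping to be the crux of the proof.
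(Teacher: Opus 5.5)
Your Steps~1 and~2 are correct. Your overall architecture also matches the paper's: atoms of the algebra of sets carrying $g$, an ultrafilter on an atom, then Lemma~\ref{lem:complete-ultrafilters}. The genuine gap is the sentence ``The claim yields $\aleph_1$-completeness.''

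Your central claim only forbids infinitely many pairwise disjoint carrying sets. That does produce an atom $A$, but inside $A$ no two disjoint sets carry $g$, so the claim says nothing there. Completeness of $\mathcal U$ needs the non-carrying sets to form a $\sigma$-ideal: if $g$ kills $K^{D_n}$ for pairwise disjoint $D_n$, then $g$ kills $K^{\bigsqcup_n D_n}$. Nothing you list implies this. A nonprincipal ultrafilter on $\aleph_0$, taken as the family of carrying sets, satisfies your claim and every filter property in Step~3, yet it is not countably complete. Moreover, for $I=\aleph_0$ and $D_n=\{n\}$, this $\sigma$-additivity is exactly the statement that $g|_{K^{(\aleph_0)}}=0$ forces $g=0$, which is the heart of the proposition.

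The paper proves it as a separate claim: an $R$-map $h:\prod_n M_n\to K$ that kills $\bigoplus_n M_n$ is zero. Normalize $h(x)=v_0$ using Lemma~\ref{lem:transitive}, and put $z_j=x_j\sum_{n\le j}E_{0n}$. Then $ze_m-xE_{0m}$ is finitely supported, so $h(z)e_m=v_m$ for every $m$, which is impossible. This ``staircase'' element is the idea your proposal is missing. It is different from the block-diagonal twist you were looking for.

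Your central claim is also left unproved, though the bookkeeping you anticipate is easy. Choose a unit $u_n$ with $g(z_n)u_n=v_n$ and set $y_n=z_nu_ne_n$. Every coordinate of $y_n$ lies in $Ke_n=kv_n$. So the assembled $y$ satisfies $ye_n=y_n$, and hence $g(y)e_n=v_n$ for all $n$, contradicting finite support. Equivalently, your $P_N$ now truncates exactly the blocks $J_0,\dots,J_{N-1}$. This is the paper's proof that $\mathcal P(I)/\mathcal N$ is finite.

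Finally, a principal $\mathcal U$ at $i$ means $\{i\}\in\mathcal U$, so $g\iota_i\neq0$. This contradicts $g|_{K^{(I)}}=0$ directly. Since your $\mathcal U$ lives on an atom, ``$g$ factors through $K^{\{i\}}$'' is not what you obtain, but you do not need it.
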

\begin{proof}
\emph{We claim that any homomorphism $h:\prod_{n<{\aleph_0}}M_n\longrightarrow K$ of $R$-modules killing $\bigoplus_{n<{\aleph_0}}M_n$ is zero.}
Suppose otherwise that $h \neq 0$.
Then there is $h(x)$ with $r$th coordinate nonzero.
By Lemma~\ref{lem:transitive}, assume $h(x) = v_0$.
Write $x=(x_j)_{j<{\aleph_0}}$ and set $z=(z_j)_{j<{\aleph_0}}$ with $z_j=\sum_{n\le j}x_jE_{0n}$.
For each $m < {\aleph_0}$, one has $z_je_m=x_jE_{0m}$ if $j\ge m$.
Thus $ze_m-xE_{0m}$ has finite support and is killed by $h$.
Hence $h(z)e_m=h(x)E_{0m}=v_m$ for each $m < {\aleph_0}$.
This is impossible, since $h(z)\in K$ has finite support.
Therefore the claim is proved.

Now fix a nonzero $R$-module homomorphism $h:K^I\longrightarrow K$.
For $J\subseteq I$, put $\mathcal N=\{J\subseteq I\mid h|_{K^J}=0\}$. 
By the claim above, $\mathcal{N}$ is closed under countable disjoint unions.
Since $\mathcal{N}$ is also closed under subsets, it is closed under countable unions.
Since $\mathcal{P}(I)$ is a Boolean algebra with $\mathcal{N}$ an ideal, write the quotient algebra 
\[ 
\mathcal{P}(I) \longrightarrow \mathcal B=\mathcal P(I)/\mathcal N, \qquad S \longmapsto [S].
\]

\emph{We claim that $\mathcal B$ is finite.}
Suppose otherwise.
One can take a countable family of pairwise disjoint nonzero subsets $(J_n)_{n < {\aleph_0}}$ of $I$ such that $h|_{K^{J_n}}\ne0$ for every $n<{\aleph_0}$.
For each $n$, choose $x_n\in K^{J_n}$ with $h(x_n)\ne0$.
By Lemma~\ref{lem:transitive}, choose a unit $u_n\in R$ such that $h(x_n)u_n=v_n$, and put $y_n=x_nu_ne_n$.
Then $h(y_n)=v_n$ and $y_ne_n=y_n$.
Since the $J_n$'s are pairwise disjoint, one takes $y\in K^I$ with $y|_{J_n}=y_n$ and sets all remaining coordinates to zero.
Each $y_n$ has only its $n$th $K$-coordinate nonzero, so $ye_n=y_n$.
Consequently,
\[
 h(y)e_n=h(ye_n)=h(y_n)=v_n\qquad \text{for each $n<{\aleph_0}$}.
\]
Thus $h(y)$ has infinitely many nonzero coordinates, a contradiction.
This proves the claim.

Since $\mathcal B$ is finite, choose a complete set of pairwise orthogonal primitive idempotents $b_1,\ldots,b_m$.
Fix arbitrary $1 \leq i \leq m$.
Put $\mathcal U_i=\{J\subseteq I\mid b_i\le[J]\}$ which is an ultrafilter, 
and fix $C_i\subseteq I$ with $[C_i]=b_i$.
For $J_n\in\mathcal U_i$ with $n<\omega$, one has $C_i\setminus J_n\in\mathcal N$.  
Since $\mathcal N$ is closed under countable unions, one has
\[
  C_i\setminus\bigcap_{n<\omega}J_n =\bigcup_{n<\omega}(C_i\setminus J_n)\in\mathcal N.
\]
Thus $\mathcal U_i$ is countably complete, hence principal by Lemma~\ref{lem:complete-ultrafilters}.
Hence for each $1 \leq i \leq m$, there is a unique $p_i\in I$ such that $\mathcal U_i=\{J\subseteq I\mid p_i\in J\}$.
Put $F=\{p_1,\ldots,p_m\}$.
Since $F\in\mathcal U_i$ for every $i$, one has $b_i\le[F]$ for every $i$, and therefore $[F]=1$.
Hence $I\setminus F\in\mathcal N$, so $h$ factors through $K^F$.
By Lemma~\ref{lem:scalars}, this gives $h=\sum_{p\in F}c_p\pi_p$ with $c_p\in k$.

Restriction to each coordinate copy of $K$ gives uniqueness.
\end{proof}

\paragraph{Verification of the covering class.}

Assume that no uncountable measurable cardinal exists.
Fix an object in $\operatorname{Mod}_R^\to$, say $a:X\longrightarrow Y$.
Put
\[
S_a=\operatorname{im}\bigl(a^*:\operatorname{Ext}_R^1(Y,K)\longrightarrow\operatorname{Ext}_R^1(X,K)\bigr).
\]
Choose a $k$-basis $(\xi_i)_{i\in I}$ of $S_a$.
Fix $\zeta_i\in\operatorname{Ext}_R^1(Y,K)$ with $a_K^*(\zeta_i)=\xi_i$.
By \eqref{eq:extproduct}, choose $\theta\in\operatorname{Ext}_R^1(Y,K^I)$ such that $(\pi_i)_*\theta=\zeta_i$ for every $i\in I$.
Put $\eta=a^*\theta$, represented by the pullback sequence
\[
\xymatrix@R=12pt{
 \eta:&0\ar[r]&K^I\ar[r]^j\ar@{=}[d]&E\ar[r]^p\ar[d]&X\ar[r]\ar[d]^a&0\\
 \theta:&0\ar[r]&K^I\ar[r]&E'\ar[r]&Y\ar[r]&0.
}
\]
Naturality gives $(\pi_i)_*\eta=a^*((\pi_i)_*\theta)=a^*\zeta_i=\xi_i$ for every $i\in I$.
By the key proposition (Proposition~\ref{prop:finite-coordinates}) the connecting map is an isomorphism
\begin{equation}\label{eq:connecting}
 \partial_\eta:\operatorname{Hom}_R(K^I,K)\xlongrightarrow{\sim}S_a, \qquad \pi_i\longmapsto\xi_i.
\end{equation}

\begin{proposition}\label{prop:covers}
The square $(p,1_Y):(E\overset{ap}{\longrightarrow}Y)\longrightarrow(X\overset{a}{\longrightarrow}Y)$ is a $\mathcal C$-cover.
\end{proposition}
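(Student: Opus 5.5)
The plan is to check the three defining properties of a $\mathcal C$-cover directly: membership of $ap$ in $\mathcal C$, the precover (factorization) property, and the cover property for endomorphisms. All three rest on two facts. The first is the product decomposition \eqref{eq:extproduct}. The second is the isomorphism \eqref{eq:connecting}, which I read as $h\mapsto h_*\eta$ from $\operatorname{Hom}_R(K^I,K)$ onto $S_a$; it comes from the key proposition (Proposition~\ref{prop:finite-coordinates}).

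\emph{Step 1: $ap\in\mathcal C$.}
Let $\zeta\in\operatorname{Ext}_R^1(Y,K)$. Then $a^*\zeta\in S_a$, so by \eqref{eq:connecting} we have $a^*\zeta=h_*\eta$ for some $h:K^I\to K$. Hence
\[
(ap)^*\zeta=p^*a^*\zeta=p^*h_*\eta=h_*p^*\eta .
\]
The extension $p^*\eta$ splits, since the pullback of an extension along its own epimorphism $p$ has the diagonal section. Therefore $(ap)^*\zeta=0$, so $(ap)^*=0$.

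\emph{Step 2: precover.}
Take $b:X'\to Y'$ in $\mathcal C$ and a square $(f,g)$ from $b$ to $a$, so that $af=gb$. It suffices to find $f':X'\to E$ with $pf'=f$. Then $(f',g)$ is a morphism from $b$ to $ap$, because $apf'=af=gb$, and it composes with $(p,1_Y)$ to give $(f,g)$.

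Such an $f'$ exists if and only if $f^*\eta=0$ in $\operatorname{Ext}_R^1(X',K^I)$. By \eqref{eq:extproduct} it is enough to check this coordinatewise:
\[
(\pi_i)_*f^*\eta=f^*\xi_i=f^*a^*\zeta_i=b^*g^*\zeta_i=0,
\]
where the last equality holds because $b^*=0$.

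\emph{Step 3: cover.}
Let $(u,v)$ be an endomorphism of $ap$ with $(p,1_Y)(u,v)=(p,1_Y)$. Then $v=1_Y$ and $pu=p$. Consequently $u-1_E$ lands in $\ker p=j(K^I)$, so we can write $u=1_E+j\psi$ with $\psi:E\to K^I$.

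The crux is to show $\psi j=0$. Fix $i\in I$. The map $\pi_i\psi j:K^I\to K$ extends over $j$ (namely by $\pi_i\psi$), so $(\pi_i\psi j)_*\eta=0$. By the injectivity in \eqref{eq:connecting}, $\pi_i\psi j=0$. Since this holds for every $i$, we get $\psi j=0$.

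It follows that $(j\psi)^2=j(\psi j)\psi=0$. Hence $u$ is invertible with inverse $1_E-j\psi$, and $(u,1_Y)$ is an automorphism.

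The main obstacle is this last step. It is the only place where injectivity of $\partial_\eta$, and hence the absence of measurable cardinals via Proposition~\ref{prop:finite-coordinates}, is genuinely used: it is what forces the endomorphism $u$ to be unipotent.
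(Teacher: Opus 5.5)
Your proof is correct and follows the paper's argument step for step: $ap\in\mathcal C$ because $p^*\eta$ splits, the lifting criterion $f^*\eta=0$ is checked coordinatewise through \eqref{eq:extproduct}, and the cover property comes from injectivity of $\partial_\eta$. Writing $u=1_E+j\psi$ and showing $\psi j=0$ is the paper's step that the induced map $\alpha=1+\psi j$ on $K^I$ is the identity; your nilpotence of $j\psi$ then replaces the paper's unstated five-lemma conclusion that $s$ is invertible.
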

\begin{proof}
  We show that $(p,1_Y)$ is a $\mathcal C$-precover.
  Since $p^*S_a = \operatorname{im} p^\ast \partial_\eta =0$, $ap\in\mathcal C$.
For $c:U\longrightarrow V$ in $\mathcal C$ and a square $(u,v):c\longrightarrow a$, one has $u^*a^*=c^*v^*=0$.
For each $i\in I$, binaturality of pushout and pullback gives
  \[
  (\pi_i)_*(u^*\eta)  =u^*((\pi_i)_*\eta)  =u^*\xi_i  =u^*a^*\zeta_i  =c^*v^*\zeta_i  =0.
  \]
  The last equality is by $c\in\mathcal C$.
  All coordinates of $u^*\eta$ vanish, hence $u^*\eta=0$ by \eqref{eq:extproduct}.
  Thus $u$ lifts through $p$:
  \[
  \xymatrix@R=12pt{
   U\ar[d]_c\ar@{.>}[r]_{\widetilde u}\ar@/^1pc/[rr]^u
   &E\ar[d]^{ap}\ar[r]_p
   &X\ar[d]^a\\
   V\ar[r]_v&Y\ar@{=}[r]&Y.
  }
  \]
  Hence $\mathcal{C}$ is precovering.

  To see that $\mathcal{C}$ is covering, take $(s, t) : (E\overset{ap}{\longrightarrow}Y)\longrightarrow (E\overset{ap}{\longrightarrow}Y)$ with $(p,1_Y)(s,t)=(p,1_Y)$.
  Then $t = 1_Y$ and $s$ is an endomorphism of $E$ with $ps=p$, yielding a commutative diagram of exact sequences
\[
\xymatrix@R=12pt{
 0\ar[r]&K^I\ar[r]^j\ar[d]_\alpha&E\ar[r]^p\ar[d]_s&X\ar[r]\ar@{=}[d]&0\\
 0\ar[r]&K^I\ar[r]_j&E\ar[r]_p&X\ar[r]&0.
}
\]
Thus $\alpha_*\eta=\eta$.
By \eqref{eq:connecting}, $\pi_i\alpha=\pi_i$ for every $i$, so $\alpha=1$.
Hence $s$ is invertible, and $(s,t)$ is invertible in $\operatorname{Mod}_R^\to$.
It follows that $(p,1_Y)$ is a $\mathcal C$-cover.
\end{proof}

\paragraph{Nonclosedness under filtered colimits.}

Put $J_m=\bigoplus_{n<m}e_nR$, $J=\bigoplus_{n<{\aleph_0}}e_nR$, $M_m=R/J_m$ and $M=R/J$.
Since $J_m=(e_0+\cdots+e_{m-1})R$ is a direct summand of $R$, each $M_m$ is cyclic projective.
The quotient maps give $\varinjlim_mM_m=M$.
Applying $\operatorname{Hom}_R(-,K)$ to $0\longrightarrow J\longrightarrow R\longrightarrow M\longrightarrow0$ gives
\[
\xymatrix@R=12pt{
 \operatorname{Hom}_R(R,K)\ar[r]\ar[d]_{\cong}&\operatorname{Hom}_R(J,K)\ar[r]\ar[d]^{\cong} &\operatorname{Ext}_R^1(M,K)\ar[r]&0\\
 K\ar[r]&\prod_{n<{\aleph_0}}Ke_n.&&
}
\]
Under these identifications, the restriction map becomes
\[
K\longrightarrow\prod_{n<\aleph_0}Ke_n,\qquad x\longmapsto(xe_n)_{n<\aleph_0}.
\]
Since $Ke_n=kv_n$, this is the natural inclusion $k^{(\aleph_0)}\longrightarrow k^{\aleph_0}$.
Hence $\operatorname{Ext}_R^1(M,K)\cong k^{\aleph_0}/k^{(\aleph_0)}\ne0$, where the constant sequence $(1,1,\ldots)$ represents a nonzero class.
Hence the chain
\[
\xymatrix@R=12pt{
 M_0\ar[r]\ar[d]_{1}&M_1\ar[r]\ar[d]_{1}&\cdots\ar[r]&M\ar[d]^{1}\\
 M_0\ar[r]&M_1\ar[r]&\cdots\ar[r]&M
}
\]
has $1_{M_m}\in\mathcal C$ at every stage but $1_M\notin\mathcal C$ at its colimit.
This proves Theorem~\ref{thm:construction}.




\par\bigskip
\begingroup
\small
\noindent\textsc{School of Mathematical Sciences, Shanghai Jiao Tong University, Shanghai 200240, P. R. China}\par
\noindent\textit{Email address:} \href{mailto:zhangchencheng@sjtu.edu.cn}{\texttt{zhangchencheng@sjtu.edu.cn}}
\par\endgroup


\begin{thebibliography}{GGK09}
\raggedright

\bibitem[AHT18]{AHT}
L.~Angeleri H\"ugel, J.~\v{S}aroch and J.~Trlifaj, \emph{Approximations and Mittag-Leffler conditions---the applications}, Israel J. Math. \textbf{226} (2018), 757--780.
\url{https://arxiv.org/abs/1612.01140}.

\bibitem[B{\v S}23]{BS}
S.~Bazzoni and J.~{\v S}aroch, \emph{Enochs conjecture for cotorsion pairs and more}, arXiv:2303.08471v3 (2023).
\url{https://arxiv.org/abs/2303.08471}.

\bibitem[BEE01]{BEE}
L.~Bican, R.~El Bashir and E.~E.~Enochs, \emph{All modules have flat covers}, Bull. London Math. Soc. \textbf{33} (2001), no.~4, 385--390.

\bibitem[Eno81]{Enochs81}
E.~E.~Enochs, \emph{Injective and flat covers, envelopes and resolvents}, Israel J. Math. \textbf{39} (1981), 189--209.

\bibitem[EJ11]{EJ}
E.~E.~Enochs and O.~M.~G.~Jenda, \emph{Relative Homological Algebra, Volume 1}, second revised and extended edition, De Gruyter Expositions in Mathematics 30, De Gruyter, Berlin/Boston, 2011.

\bibitem[GT12]{GT}
R.~G\"obel and J.~Trlifaj, \emph{Approximations and Endomorphism Algebras of Modules, Volume 1: Approximations}, second revised and extended edition, De Gruyter Expositions in Mathematics 41, De Gruyter, Berlin/Boston, 2012.

\bibitem[Gol24]{Goldberg}
G.~Goldberg, \emph{Fudan notes on UA}, lecture notes (2024), Section~5.
\url{https://math.berkeley.edu/~goldberg/Slides/FudanNotes.pdf}.

\bibitem[Iac24]{Iacob}
A.~Iacob, \emph{The class of Gorenstein injective modules is covering if and only if it is closed under direct limits}, arXiv:2403.02493v2 (2024).
\url{https://arxiv.org/abs/2403.02493}.

\bibitem[Sco61]{Scott}
D.~Scott, \emph{Measurable cardinals and constructible sets}, Bull. Acad. Polon. Sci. S\'{e}r. Sci. Math. Astronom. Phys. \textbf{9} (1961), 521--524.

\bibitem[\v{S}\v{S}20]{SS}
J.~\v{S}aroch and J.~\v{S}\v{t}ov\'{i}\v{c}ek, \emph{Singular compactness and definability for $\Sigma$-cotorsion and Gorenstein modules}, Selecta Math. (N.S.) \textbf{26} (2020), article~23.
\url{https://arxiv.org/abs/1804.09080}.

\end{thebibliography}
\end{document}